\documentclass[12pt,reqno]{article}

\usepackage[usenames]{color}
\usepackage{amssymb}
\usepackage{amsmath}
\usepackage{amsthm}
\usepackage{amsfonts}
\usepackage{amscd}
\usepackage{graphicx}

\usepackage[colorlinks=true,
linkcolor=webgreen,
filecolor=webbrown,
citecolor=webgreen]{hyperref}

\definecolor{webgreen}{rgb}{0,.5,0}
\definecolor{webbrown}{rgb}{.6,0,0}

\usepackage{color}
\usepackage{fullpage}
\usepackage{float}

\usepackage{graphics}
\usepackage{latexsym}
\usepackage{epsf}
\usepackage{breakurl}

\newcommand{\seqnum}[1]{\href{https://oeis.org/#1}{\rm \underline{#1}}}

\def\modd#1 #2{#1\ \mbox{\rm (mod}\ #2\mbox{\rm )}}
\def\tet{\uparrow\uparrow}

\begin{document}

\title{Brik's sequence:  a strange recursion}

\author{Jeffrey Shallit\\
School of Computer Science\\
University of Waterloo\\
Waterloo, ON  N2L 3G1 \\
Canada\\
\href{mailto:shallit@uwaterloo.ca}{\tt shallit@uwaterloo.ca}}

\maketitle

\theoremstyle{plain}
\newtheorem{theorem}{Theorem}
\newtheorem{corollary}[theorem]{Corollary}
\newtheorem{lemma}[theorem]{Lemma}
\newtheorem{proposition}[theorem]{Proposition}

\theoremstyle{definition}
\newtheorem{definition}[theorem]{Definition}
\newtheorem{example}[theorem]{Example}
\newtheorem{conjecture}[theorem]{Conjecture}

\theoremstyle{remark}
\newtheorem{remark}[theorem]{Remark}

\def\Enn{\mathbb{N}}

\begin{abstract}
We study the properties of the sequence of words $(B_i)$,
where $B_1 = 101$ and $B_{i+1} = B_i C_i$ for $i \geq 1$, where
$C_i$ is $B_i$ with the first $i$ symbols removed, and
the infinite binary sequence ${\bf b} = 10101101011011101 \cdots$
of which all the $B_i$ are prefixes.
We show that $\bf b$ is recurrent, but not uniformly recurrent; it has
exponential factor complexity; it is not morphic; and the density of $1$'s
exists and is transcendental.
\end{abstract}

\section{Introduction}

In January 2018, Garo Brik, then a student in one of the author's courses
at the University of Waterloo, proposed a very interesting iteration,
as follows:
$B_1 = 101$ and $B_{i+1} = B_i C_i$ for $i \geq 1$, where
$C_i$ is $B_i$ with the first $i$ symbols removed.
Thus, for example, we find
\begin{align*}
B_1 &= 101 &\quad C_1 &= 01 \\
B_2 &= 10101 &\quad C_2 &= 101  \\
B_3 &= 10101101 &\quad C_3 &= 01101 \\
B_4 &= 1010110101101 &\quad C_4 &= 110101101\\
B_5 &= 1010110101101110101101  &\quad C_5 &= 10101101110101101
\end{align*}
Since $B_{i-1}$ is a prefix of $B_i$ for all $i \geq 2$, there is a 
unique infinite sequence
$${\bf b} = 101011010110111010110110101101110101101010110111010110110101101110101101 \cdots$$
of which all the $B_i$ are prefixes.  Although at first glance
the iteration seems simple, and perhaps related to more familiar infinite
words such as the Thue-Morse word $\bf t$,
it turns out to have some subtle properties that are not evident at
first glance.  In this paper, we explore the properties of the $B_i$ and
$\bf b$, including factors, recurrence, 
and the frequency of the number of $1$'s.

The sequence $\bf b$ appears, missing its initial $1$ term,
in the OEIS \cite{oeis} as sequence \seqnum{A334820}.

All finite and infinite words in this paper are indexed starting at
position $1$.

\section{Basic properties}

\begin{proposition}
We have 
\begin{itemize}
\item[(a)] Let $\ell_i = |B_i|$.  Then $\ell_i = 2^{i-1} + i+1$ for $i \geq 1$.
\item[(b)] $B_i$ ends in $01$ for $i \geq 1$.
\end{itemize}
\label{useful}
\end{proposition}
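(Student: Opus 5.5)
Both parts go by induction on $i$, with part (a) feeding into part (b) through the guarantee that $C_i$ is nonempty and long enough.

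For (a), the recursion gives $|C_i| = \ell_i - i$ whenever $\ell_i \ge i$. Hence $\ell_{i+1} = \ell_i + |C_i| = 2\ell_i - i$. The base case is $\ell_1 = 3 = 2^0 + 1 + 1$. For the inductive step, assume $\ell_i = 2^{i-1}+i+1$. Then $\ell_i - i = 2^{i-1}+1 \ge 2 > 0$, so the truncation defining $C_i$ makes sense. We get
\[
\ell_{i+1} = 2(2^{i-1}+i+1) - i = 2^i + i + 2 = 2^{(i+1)-1} + (i+1) + 1,
\]
which completes the induction.

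For (b), the base case is $B_1 = 101$, which ends in $01$. Assume $B_i$ ends in $01$. By (a), $|C_i| = \ell_i - i = 2^{i-1}+1 \ge 2$. Since $C_i$ is obtained from $B_i$ by deleting a prefix, it is a suffix of $B_i$ of length at least $2$. So $C_i$ ends in the same last two symbols as $B_i$, namely $01$. Therefore $B_{i+1} = B_i C_i$ ends in $01$.

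The only point needing care is to check, at each step, that $C_i$ is well defined and has length at least $2$. Part (a) supplies exactly this, since $|C_i| = 2^{i-1}+1 \ge 2$ for all $i \ge 1$. So (a) should be proved first, or both parts together in a single induction. Beyond that, the argument is a routine computation.
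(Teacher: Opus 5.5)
Your proof is correct and is exactly the induction on $i$ that the paper dismisses as ``trivial'': the recurrence $\ell_{i+1} = 2\ell_i - i$ gives (a). For (b), the bound $|C_i| = 2^{i-1}+1 \ge 2$ makes $C_i$ a suffix of $B_i$ of length at least $2$, so $B_{i+1} = B_i C_i$ ends in $01$.
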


\begin{proof}
A trivial induction on $i$.
\end{proof}

Call an index $i$ ${\it good\/}$ if $B_i$ has a border of length $i$; that is,
if $B_i[1..i] = B_i[\ell_i - i + 1..\ell_i]$.  
\begin{lemma}
If $i$ is good, then $\ell_i$ is good.
\label{good}
\end{lemma}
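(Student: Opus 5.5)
The plan is to show directly that $B_{\ell_i}$ both begins and ends with $B_i$. Since $|B_i| = \ell_i$, this says exactly that $B_{\ell_i}$ has a border of length $\ell_i$. The prefix half is immediate: $\ell_i \geq i$, and each $B_j$ is a prefix of $B_{j+1}$, so $B_i$ is a prefix of $B_{\ell_i}$. All the work is in the suffix half, which I would do in two steps.

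\emph{Step 1: $B_{i+1}$ ends with $B_i$.} By definition $B_{i+1} = B_i C_i$ with $C_i = B_i[i+1..\ell_i]$, so $B_i = B_i[1..i]\,C_i$. Because $i$ is good, the last $i$ symbols of $B_i$ are exactly $B_i[1..i]$. Hence the last $i + |C_i| = \ell_i$ symbols of $B_{i+1} = B_i C_i$ are $B_i[1..i]\,C_i = B_i$.

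\emph{Step 2: propagate this suffix upward.} For any $j$, the word $B_{j+1} = B_j C_j$ ends with $C_j$, and $C_j$ is itself a suffix of $B_j$. So whenever $|C_j| \geq \ell_i$, the suffix of length $\ell_i$ of $B_{j+1}$ equals the suffix of length $\ell_i$ of $B_j$. By Proposition~\ref{useful}(a), $|C_j| = \ell_j - j = 2^{j-1}+1$. For $j \geq i+1$ this gives
\[
|C_j| \geq 2^i + 1 \geq 2^{i-1} + i + 1 = \ell_i,
\]
where the middle inequality reduces to $2^{i-1} \geq i$, which holds for all $i \geq 1$. Induction on $j \geq i+1$, starting from Step~1, then shows that $B_j$ ends with $B_i$ for every $j \geq i+1$.

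Finally, $\ell_i = 2^{i-1}+i+1 \geq i+1$, so taking $j = \ell_i$ shows that $B_{\ell_i}$ ends with $B_i$, and the lemma follows. The only delicate point is the length comparison in Step~2. The propagation argument is valid only once $C_j$ is long enough to contain the whole suffix of length $\ell_i$, and Step~1 is precisely what allows us to start the induction at $j = i+1$, where that condition already holds. The inequality itself is routine once it is reduced to $2^{i-1} \geq i$.
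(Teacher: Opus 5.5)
Your proof is correct and follows the paper's argument essentially step for step. You use goodness of $i$ to show that $B_{i+1}$ ends with $B_i$, propagate this suffix to every $B_j$ with $j \geq i+1$ via the bound $|C_j| = 2^{j-1}+1 \geq \ell_i$, and conclude by taking $j = \ell_i$, anchoring the induction at $j = i+1$ (the paper's text says ``base case of $s=1$'', which is a slip).
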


\begin{proof}
Assume $i$ is good.  The definition of $B_{i+1}$ 
says that the length-$\ell_i$ suffix of $B_{i+1}$ is
$B_i[\ell_i - i + 1.. \ell_i] B_i[i+1..\ell_i]$.
Because $i$ is good, this is equal to
$B_i[1..i]B_i[i+1..\ell_i] = B_i$.  Thus $B_{i+1}$ ends with $B_i$.

We now argue by induction that $B_s$ ends with $B_i$ for all $s> i$.
We just proved the base case of $s=1$.
Otherwise, note that
$B_{s+1} = B_s C_s$ and
$|C_s| = \ell_s - s = 2^{s-1} + 1 \geq 2^i + 1 \geq 2^{i-1} + i+1 = \ell_i$,
where we have used the fact that $2^{i-1} \geq i$ for all $i\geq 1 $.
Thus $C_s = B_s[s+1..\ell_s]$ ends with $B_i$, and hence so does $B_{s+1}$.

Thus, in particular, $B_{\ell_i}$ ends with $B_i$, and trivially
$B_{\ell_i}$ begins with $B_i$.  Thus $\ell_i$ is good.
\end{proof}

\begin{theorem}
The infinite word $\bf b$ is recurrent.
\label{recur}
\end{theorem}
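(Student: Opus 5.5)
Recurrence means every factor of $\bf b$ occurs infinitely often. Every factor $w$ of $\bf b$ lies inside some prefix $B_i$, so it suffices to show that each prefix $B_i$ occurs infinitely often in $\bf b$. It is even enough to do this for infinitely many $i$, since $B_j$ is a prefix of $B_i$ for $j \le i$, and any occurrence of $B_i$ contains an occurrence of each shorter prefix. The plan is to build an infinite chain of good indices with Lemma~\ref{good}, and then use the argument inside its proof: for a good index $i$, $B_s$ ends with $B_i$ for every $s > i$.

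First I find a good index to start the chain. Take $i=1$: $B_1 = 101$, and its length-$1$ prefix and suffix are both $1$, so $1$ is good. But $\ell_1 = 3$, so Lemma~\ref{good} gives $3$ good; then $\ell_3 = 2^2+4 = 8$ is good, and $\ell_8 = 2^7 + 9 = 137$, and so on. Since $\ell_i > i$ for all $i$, the iteration $i \mapsto \ell_i$ strictly increases, so we obtain an infinite increasing sequence of good indices $g_1 = 1 < g_2 = 3 < g_3 = 8 < \cdots$.

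Next, fix a good index $g$. The proof of Lemma~\ref{good} shows that $B_s$ ends with $B_g$ for every $s > g$. Since $\ell_s \to \infty$, and $B_s$ is a prefix of $\bf b$ ending with $B_g$, there is an occurrence of $B_g$ in $\bf b$ ending at position $\ell_s$ for each $s > g$. These end positions are distinct, so $B_g$ occurs infinitely often. Finally, given any factor $w$, choose a good $g$ with $\ell_g$ large enough that $w$ is a factor of $B_g$. Each of the infinitely many occurrences of $B_g$ yields an occurrence of $w$ at a distinct position, since the occurrences of $B_g$ start at distinct positions and $w$ sits at a fixed offset inside each. Hence $w$ occurs infinitely often.

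The only step needing care is the use of the intermediate claim from the proof of Lemma~\ref{good} (that $B_s$ ends with $B_i$ for all $s>i$), not just its conclusion. If one prefers to use only the statement, there is an alternative. Goodness of $\ell_g$ says $B_{\ell_g}$ has a border of length $\ell_g$, which is exactly $B_g$; that gives one occurrence of $B_g$ ending at $\ell_{\ell_g}$. Iterating through the chain of good indices gives infinitely many such occurrences, since each later good index $g'$ has $B_{g'}$ as a suffix of $B_{\ell_{g'}}$, and $B_g$ is a prefix of $B_{g'}$. Either route works. I expect no real obstacle beyond verifying the base case that $1$ is good.
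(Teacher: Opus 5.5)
Your proof is correct and follows essentially the paper's route: the same chain $1, 3, 8, 137, \ldots$ of good indices, obtained by iterating $i \mapsto \ell_i$ with Lemma~\ref{good}. The paper uses only the lemma's conclusion: $B_{q_i}$ is both a prefix and a suffix of $B_{q_{i+1}}$, which gives two occurrences of $w$, and it then cites the standard fact that two occurrences of every factor force recurrence. Your main route instead uses the intermediate claim from the lemma's proof ($B_s$ ends with $B_g$ for all $s > g$) to exhibit infinitely many occurrences directly, and your alternative route is the paper's argument run along the whole chain.
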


\begin{proof}
Define $q_0 = 1$ and $q_{i+1} = \ell_{q_i}$ for $i \geq 0$.
By Lemma~\ref{good} every $q_i$ is good.

Let $w$ be a finite factor of $\bf b$.   Then $w$ occurs in some finite
prefix $B_n$ of $\bf b$.  Choose $i$ such that $q_i \geq n$.  
Since $B_n$ is a prefix of $B_{q_i}$, the word $w$ occurs in $B_{q_i}$.
But $B_{q_i}$ occurs twice in $B_{q_{i+1}}$, once as a prefix and once
as a suffix.  Hence $w$ occurs at least twice in $B_{q_{i+1}}$ and hence
twice in $\bf b$.  By a standard lemma, $\bf b$ is recurrent.
\end{proof}

So the infinite word $\bf b$ is recurrent,
but it is not uniformly recurrent.  In fact, as
we will see, $1^n$ occurs in $\bf b$ for all $n$. 

\begin{theorem}
Let $r_n$ be the starting position of the first occurrence of
$1^n$ in $\bf b$.  Then
$r_1 = 1$, $r_2 = 5$, and $r_{n+1} = 2^{r_n - 2} + r_n$ for
$n \geq 2$.   
\end{theorem}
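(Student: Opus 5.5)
The plan is to track how runs of $1$'s can arise under the iteration $B_{i+1}=B_iC_i$. Every factor of $B_{i+1}$ lies inside $B_i$, lies inside $C_i$ (and hence inside $B_i$, since $C_i$ is a suffix of $B_i$), or straddles the boundary between $B_i$ and $C_i$. By Proposition~\ref{useful}(b), $B_i$ ends in $01$. So the maximal run of $1$'s at the junction starts at position $\ell_i$ and consists of that final $1$ followed by the maximal run of $1$'s at the start of $C_i$. Since $C_i=B_i[i+1..\ell_i]$ and $B_i$ is a prefix of $\bf b$, that prefix run is the run of $1$'s in $\bf b$ beginning at position $i+1$, truncated at $\ell_i$. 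The truncation is harmless for the lengths we care about, because $|C_i|=2^{i-1}+1$ is large compared with the runs that can occur by stage $i$.

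The key steps, in order, are as follows. First, check the base values $r_1=1$ and $r_2=5$ directly from $B_3=10101101$. Then prove by induction on $n\ge 2$ the following joint statement: $1^{n+1}$ does not occur in $B_{r_n-1}$, and its first occurrence in $\bf b$ starts at position $\ell_{r_n-1}$.

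For the first part, I apply the trichotomy above to each step $j\to j+1$ with $j<r_n-1$. A junction at step $j$ would produce a run of length $\ge n+1$ only if $\bf b$ had $1^n$ starting at position $j+1<r_n$, which contradicts the minimality of $r_n$. Runs lying entirely inside $B_j$ or $C_j$ are shorter by induction on $j$. Hence no run in $B_{r_n-1}$ has length $n+1$.

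For the second part, I take $i=r_n-1$. Then $C_i$ begins with $1^n$, because position $r_n=i+1$ of $\bf b$ starts an occurrence of $1^n$ and $|C_i|\ge n$. The final symbol $1$ of $B_i$ is preceded by $0$, so $B_{i+1}$ contains $1^{n+1}$ starting exactly at position $\ell_i$.

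To see that no earlier occurrence exists, suppose $1^{n+1}$ starts at some $s<\ell_i$. If the occurrence lies inside $B_i$, this contradicts the first part. Otherwise it crosses position $\ell_i-1$, which holds a $0$, which is impossible. Therefore
\[
r_{n+1}=\ell_{r_n-1}=2^{r_n-2}+(r_n-1)+1=2^{r_n-2}+r_n,
\]
using Proposition~\ref{useful}(a).

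The main obstacle is making the first-occurrence claim fully rigorous. One must ensure that the run started at position $i+1$ of $\bf b$ is genuinely visible inside $C_i$, which needs $i+n\le\ell_i$; this is easy, since $r_n\ge 5$ and $\ell_i$ grows exponentially. One must also ensure that the induction on $j$ correctly bounds runs lying entirely inside $C_j$ by runs in $B_j$. I would handle this by proving the auxiliary statement that the longest run of $1$'s in $B_{j+1}$ equals the maximum of the longest run in $B_j$ and the junction run. This gives a clean recursive description of the maximal run lengths, from which both the upper bound and the exact first position follow.
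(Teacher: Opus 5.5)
Your proposal is correct and follows the same route as the paper. A new occurrence of $1^{n+1}$ can only arise at the junction of $B_iC_i$; the suffix $01$ of $B_i$ forces $C_i$ to begin with $1^n$; minimality of $r_n$ gives $i=r_n-1$; hence $r_{n+1}=\ell_{r_n-1}=2^{r_n-2}+r_n$.

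Your write-up is more careful than the paper's, since you prove explicitly that $B_{r_n-1}$ avoids $1^{n+1}$ and that no occurrence starts before position $\ell_{r_n-1}$. Your justification of $i+n\le\ell_i$ via ``$r_n\ge 5$'' is loose on its own, but the condition is immediate: the $0$ at position $\ell_i-1\ge i+1$ forces the run of $1$'s starting at position $i+1$ to end inside $C_i$.
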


\begin{proof}
We prove this by induction on $n$.  Since $B_2 = 10101101$,
it is clear that $r_1 = 1$ and $r_2 = 5$.  Assume $n \geq 2$.
Let us consider how $1^{n+1}$ can be formed.  Say it appears for the first
time in $B_{i+1}$.  Since
$B_{i+1} = B_i C_i$ and $C_i$ is a suffix
of $B_i$, if this is the first occurrence of $1^{n+1}$, it can only
be because $1^{n+1}$ straddles the boundary between $B_i$ and
$C_i$.  
But $B_i$ ends in $01$ from Proposition~\ref{useful} (b),
so we must have $C_i$ begins with $1^n$.  By induction
the first occurrence of $1^n$ is at position $r_n$ of $\bf b$.
It follows that $i+1 = r_n$, and 
$r_{n+1} = \ell_i = 2^{r_n - 1} + r_n + 1$, as desired.
\end{proof}

To describe how quickly $r_n$ grows, we use the tetration notation
of Knuth \cite{Knuth:1976}:  $a\tet 0 = 1$, and $a\tet n = a^{a \tet (n-1)}$ for
$n \geq 1$.  

\begin{corollary}
We have $r_n \geq  2 \tet (n-1) + 3$ for $n \geq 3$.  
\end{corollary}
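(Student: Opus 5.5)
The plan is a straightforward induction on $n$, using the recurrence $r_{n+1} = 2^{r_n-2} + r_n$ from the preceding theorem together with monotonicity of $x \mapsto 2^x$. The proof of that theorem actually ends with the slightly larger expression $2^{r_n-1} + r_n + 1$, so the two forms disagree. I will only use the weaker consequence
$$r_{n+1} \geq 2^{r_n - 2} + r_n \qquad (n \geq 2),$$
which holds under either form. This way the argument does not depend on which exact constant is correct.

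For the base case $n=3$, apply the recurrence with $r_2 = 5$. This gives $r_3 \geq 2^{3} + 5 = 13$. On the other side, $2\tet 2 + 3 = 2^{2} + 3 = 7$, so $r_3 \geq 2 \tet 2 + 3$ holds.

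For the inductive step, assume $n \geq 3$ and $r_n \geq 2\tet(n-1) + 3$. Then $r_n - 2 \geq 2\tet(n-1) + 1$. Since exponentiation base $2$ is increasing, we get
$$2^{r_n-2} \geq 2^{2\tet(n-1)+1} = 2\cdot 2^{2\tet(n-1)} = 2\cdot (2\tet n).$$
Hence
$$r_{n+1} \geq 2^{r_n-2} + r_n \geq 2\cdot(2\tet n) + r_n \geq 2\tet n + 3,$$
where the last inequality uses only that $r_n \geq 3$ and $2\tet n \geq 0$. This is exactly the claim for $n+1$, which completes the induction.

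There is no real obstacle. The only point needing care is the off-by-one bookkeeping between the tetration index and the sequence index, namely that $2^{2\tet(n-1)} = 2\tet n$ by the definition $a\tet n = a^{a\tet(n-1)}$. The slack in the final chain of inequalities, a factor of $2$ and an extra $r_n$, is more than enough to absorb the $+3$.
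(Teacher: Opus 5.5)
Your proof is correct and is essentially the paper's argument: induction via $r_{n+1} = 2^{r_n-2}+r_n$, using $r_n - 2 \geq 2\tet(n-1)+1$ and $2^{2\tet(n-1)} = 2\tet n$. The only differences are that the paper starts the induction at $n=2$ (with $r_2 = 5 = 2\tet 1 + 3$) and uses a strict inequality where you keep the factor $2$. Your hedge on the recurrence is harmless but unnecessary: with $i = r_n - 1$ one gets $\ell_i = 2^{r_n-2}+r_n$, so the theorem's stated form is correct (consistent with $r_3 = 13$, $r_4 = 2061$), and the expression $2^{r_n-1}+r_n+1$ at the end of that proof is a slip.
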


\begin{proof}
By induction on $n$.  For $n = 2$ we have
$r_2 = 5 = 2 \tet 1 + 3$.  

Now assume $n \geq 2$ and $r_n \geq 2 \tet (n-1) + 3$.  
Then $r_n - 2 \geq 2 \tet (n-1)  + 1$, so
$2^{r_n - 2} \geq 2^{2 \tet (n-1) + 1} > 2^{2 \tet (n-1)} = 2 \tet n$.
Thus $r_{n+1} = 2^{r_n - 2} + r_n > 2 \tet n + r_n \geq 2 \tet n + 3$.
\end{proof}

\begin{remark}
We have $r_3 = 13$, $r_4 = 2061$, and $r_5 = 2^{2059} + 2061$.
\end{remark}

Next, we characterize all the factors of $\bf b$.

\begin{theorem}
A binary word is a factor of $\bf b$ if and only if
it contains no two consecutive $0$'s.  
\end{theorem}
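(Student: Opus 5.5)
The plan is to prove the two directions separately: avoidance of $00$ by a direct induction on $i$, and the converse by an induction on word length that prepends symbols, using the concatenation boundary $B_i C_i$ together with Theorem~\ref{recur}.

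\emph{No factor of $\bf b$ contains $00$.} It suffices to show that no $B_i$ contains $00$, by induction on $i$. The base case $B_1 = 101$ is clear. For the inductive step, $B_{i+1} = B_i C_i$, and $C_i$ is a suffix of $B_i$. So any occurrence of $00$ lies entirely in $B_i$, entirely in $C_i$, or straddles the boundary. The first two cases are excluded by induction. The third is excluded because $B_i$ ends in $1$ by Proposition~\ref{useful}(b).

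\emph{Every word without $00$ is a factor.} The key observation concerns the boundary in $B_{i+1} = B_i C_i$. By Proposition~\ref{useful}(b), $B_i$ ends in $01$, and $C_i = B_i[i+1..\ell_i] = {\bf b}[i+1..\ell_i]$. Hence, for every $k \le \ell_i - i$, the word $01\,{\bf b}[i+1..i+k]$ is a factor of $B_{i+1}$ and therefore of $\bf b$.

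Now let $y$ be any nonempty factor of $\bf b$. By Theorem~\ref{recur}, $\bf b$ is recurrent, so $y$ occurs at infinitely many positions. I choose an occurrence ${\bf b}[p..p+|y|-1] = y$ with $p \ge 2$ and $p$ large enough that $p + |y| - 1 \le \ell_{p-1} = 2^{p-2} + p$. This holds for all sufficiently large $p$, since the right side grows exponentially. Setting $i = p - 1 \ge 1$, the observation gives the following lemma.

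\emph{Lemma.} If $y$ is a factor of $\bf b$, then so are $01y$ and hence $1y$. This also covers $y$ empty, since $01$ is a factor.

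Now I induct on $|w|$ for $w$ containing no $00$. The words of length at most $1$ are handled directly: $0$ and $1$ both occur in $B_1$. If $w = 1x$, then $x$ has no $00$, so $x$ is a factor by induction, and the lemma gives that $1x$ is a factor. If $w = 0x$ with $x$ nonempty, then since $w$ has no $00$ we have $x = 1y$ with $y$ free of $00$. By induction $y$ is a factor, so the lemma gives that $01y = w$ is a factor.

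The main point needing care is the length bookkeeping in the lemma, namely that the occurrence of $y$ at position $p = i+1$ fits inside $C_i$. Recurrence lets us take $p$ arbitrarily large, and $\ell_{p-1}$ outgrows $p + |y|$, so this should be routine.
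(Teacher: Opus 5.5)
Your proof is correct and follows essentially the same route as the paper. The forward direction is an induction on $i$ using that $B_i$ ends in $01$. For the converse, you use recurrence to move an occurrence of the shorter word far enough right that it begins $C_{r-1}$, so that prepending $01$ or $1$ happens across the boundary of $B_r = B_{r-1}C_{r-1}$. Your fit condition $p+|y|-1 \le \ell_{p-1}$ is the one actually needed, and it is slightly more precise than the paper's stated $\ell_{r-1}\ge n$. Your strong induction also cleanly handles the case $w=01y$, where $|y|=|w|-2$.
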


\begin{proof}
Let us prove by induction on $i$ that no factor of $B_i$
contains two consecutive $0$'s.  This is clearly true for
$B_1$.  Otherwise, assume the claim is true for $i$.
Then $B_{i+1} = B_i C_i$, where $C_i$ is a suffix of $B_i$.
Every factor of $B_{i+1}$ is therefore a factor of $B_i$
or $C_i$, and hence contains no $00$ by induction, or
straddles the boundary.  But $B_i$ ends in $01$ by
Proposition~\ref{useful} (a), so no matter what $C_i$ begins
with, no factor containing $00$ can be formed.

Now let us prove by induction on $n$ that every word of
length $n$ without two consecutive $0$'s appears as a factor
of $\bf b$.  This is clearly true for $n = 1,2$, since
$B_2 = 10101101$.    Assume the claim is true for $n \geq 2$;
we prove it for $n+1$.  Let $w$ be a word of length $n+1$ without
two consecutive $0$'s.
Then either $w = 01w'$ or $w = 1w'$, where $w'$ does not contain
two consecutive $0$'s; by induction $w'$ appears in $\bf b$, say
at position $r$.  Since from Theorem~\ref{recur} we know that
$\bf b$ is recurrent, we can assume without loss of generality
that $r \geq 2$ and $\ell_{r-1} \geq n$.
Now consider $B_r = B_{r-1} B_i[r..\ell_{r-1}]$.
Since $B_{r-1}$ ends in $01$, we have $01w'$ and $1w'$ are both
in $B_r$.  
\end{proof}

\begin{corollary}
There are $F_{n+2}$ distinct factors of length $n$ in $\bf b$,
where $F_0 = 0$, $F_1 = 1$, $F_n = F_{n-1} + F_{n-2}$ are the
Fibonacci numbers.
\label{corfac}
\end{corollary}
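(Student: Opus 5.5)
By the preceding theorem, the factors of length $n$ of $\bf b$ are exactly the binary words of length $n$ that contain no two consecutive $0$'s. So the corollary reduces to a purely combinatorial count. Let $a_n$ denote the number of binary words of length $n$ avoiding the factor $00$; I will show that $a_n = F_{n+2}$ for all $n \geq 0$.

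For the base cases, take $n=0$ and $n=1$. There is one word of length $0$, the empty word, and $F_2 = 1$. Both words of length $1$ avoid $00$, and $F_3 = 2$. (If one prefers to start at $n=1$, then $n=2$ also serves as a base case: there are $3 = F_4$ admissible words, namely $01$, $10$, $11$.)

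For the recurrence, take $n \geq 2$ and classify an admissible word $w$ of length $n$ by its first letter, mirroring the decomposition $w = 1w'$ or $w = 01w'$ already used in the proof of the theorem.
\begin{itemize}
\item If $w$ begins with $1$, then $w = 1w'$. Here $w'$ is an arbitrary admissible word of length $n-1$, since prefixing a $1$ can never create $00$.
\item If $w$ begins with $0$, the next letter must be $1$, so $w = 01w'$. Here $w'$ is an arbitrary admissible word of length $n-2$, since prefixing $01$ can only create $00$ if $w'$ already contains it.
\end{itemize}
These two maps are bijections onto the respective sets of shorter admissible words, so $a_n = a_{n-1} + a_{n-2}$. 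This matches the Fibonacci recurrence, and the initial values agree, so induction gives $a_n = F_{n+2}$.

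There is no real obstacle here. The only point requiring care is checking that both decompositions are bijective, that is, that prefixing $1$ or $01$ never introduces $00$. That is immediate, because each prefix ends in $1$.
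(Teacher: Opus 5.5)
Your proposal is correct and follows the paper's route. The paper gives no separate proof of this corollary: it treats it as an immediate consequence of the preceding theorem (the factors of $\bf b$ are exactly the binary words avoiding $00$) plus the standard count of such words, and your decomposition $w = 1w'$ or $w = 01w'$, giving $a_n = a_{n-1} + a_{n-2}$ with $a_0 = F_2$ and $a_1 = F_3$, simply writes that count out.
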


Recall that an infinite word is said to be {\it morphic} if
it is the image, under a coding, of a fixed point of a morphism.
See, for example, \cite{Allouche&Shallit:2003}.

\begin{corollary}
The sequence $\bf b$ is not morphic.
\end{corollary}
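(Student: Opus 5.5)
The plan is to combine the factor count of Corollary~\ref{corfac} with the known fact that morphic words have subexponential factor complexity, and to use the unbounded runs of $1$'s as a backup.

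The main tool is a classical result, found for instance in \cite{Allouche&Shallit:2003} and going back to Ehrenfeucht, Lee, and Rozenberg and to Pansiot. It states that the factor complexity $p(n)$ of any morphic word (a coding of a fixed point of a morphism) is $O(n^2)$. Pansiot's classification sharpens this: the complexity is $\Theta(1)$, $\Theta(n)$, $\Theta(n\log\log n)$, $\Theta(n\log n)$, or $\Theta(n^2)$. Applying a coding can only merge factors, so it cannot increase complexity. Hence the bound passes from the pure morphic fixed point to its image under the coding.

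Next I would invoke Corollary~\ref{corfac}. The number of distinct length-$n$ factors of $\bf b$ is $F_{n+2}$. This grows like $\varphi^{n}$ with $\varphi=(1+\sqrt5)/2>1$, so it is not $O(n^2)$. If $\bf b$ were morphic, its complexity would be at most quadratic, which gives the contradiction.

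The only real obstacle is citing the polynomial bound for morphic words correctly. The bound is easy for primitive or growing morphisms. The general case, with erasing morphisms and bounded letters, needs the full result: Cobham's theorem lets one reduce to non-erasing morphisms, and then Pansiot's theorem applies. I would simply cite it.

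If a more elementary argument were preferred, I would use the runs of $1$'s instead. By the theorem on $r_n$, the word $\bf b$ contains $1^n$ for every $n$, but the first occurrence of $1^n$ sits at a tetrationally growing position $r_n$. For a morphic word, arbitrarily long blocks of a single letter arise only through iteration of the morphism. That forces first occurrences of $1^n$ to appear at positions bounded by something exponential in $n$, which $r_n \geq 2\tet(n-1)+3$ violates. Making this precise needs more case analysis, so the complexity argument is the primary route.
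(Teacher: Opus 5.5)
Your main argument is correct and is the paper's: morphic words have $O(n^2)$ factor complexity (Ehrenfeucht--Lee--Rozenberg), while Corollary~\ref{corfac} gives $F_{n+2}$ factors of length $n$, which grows exponentially. The run-length ``backup'' is unnecessary and, as you say, only a heuristic; also note that Pansiot's five-class classification is for pure morphic words, not their coded images, though you only use the $O(n^2)$ upper bound, which does carry over through the coding.
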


\begin{proof}
This follows from a result of Ehrenfeucht, Lee, and Rozenberg
\cite{Ehrenfeucht&Lee&Rozenberg:1975} that if a sequence
is morphic, then its factor complexity is $O(n^2)$.
\end{proof}

\section{Density of $1$'s in {\bf b}}

Let ${\bf b} = b_1 b_2 b_3 \cdots$.
and $a(n)$ be the number of $1$'s appearing in ${\bf b}[1..n]$.
In this section we prove

\begin{theorem}
The limit $\alpha = \lim_{n \rightarrow \infty} a(n)/n$ exists, and
equals $2 - 2 \sum_{i \geq 1} b_i 2^{-i}$.
\end{theorem}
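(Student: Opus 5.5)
Let $N_i$ denote the number of $1$'s in $B_i$. The plan is to get an exact recurrence for $N_i$, solve it for the density along the prefixes $B_i$, and then interpolate to general $n$.

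\textbf{Step 1: recurrence for $N_i$.} Since $B_{i+1} = B_i C_i$ and $C_i$ is $B_i$ with its first $i$ symbols deleted, we have
$$N_{i+1} = 2N_i - a(i),$$
because $B_i[1..i] = {\bf b}[1..i]$ contains exactly $a(i)$ ones. By Proposition~\ref{useful}(a), $\ell_i = 2^{i-1}+i+1$. Set $x_i = N_i/2^{i-1}$. Then
$$x_{i+1} = x_i - a(i)\,2^{-i},$$
and so
$$x_i = x_1 - \sum_{j=1}^{i-1} a(j)2^{-j}, \qquad x_1 = N_1 = 2.$$
Since $a(j)\le j$, the series converges. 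Hence $N_i/\ell_i \to \alpha := 2-\sum_{j\ge1} a(j)2^{-j}$, using $\ell_i/2^{i-1}\to 1$.

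\textbf{Step 2: rewrite the limit.} Write $a(j)=\sum_{k\le j} b_k$ and swap the order of summation:
$$\sum_{j\ge1} a(j)2^{-j} = \sum_{k} b_k \sum_{j\ge k}2^{-j} = \sum_k b_k\,2^{1-k} = 2\sum_k b_k 2^{-k}.$$
This gives $\alpha = 2-2\sum_i b_i2^{-i}$, which is the stated value.

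\textbf{Step 3: general $n$ (the main obstacle).} For $\ell_i < n \le \ell_{i+1}$, the prefix ${\bf b}[1..n]$ is $B_i$ followed by the prefix of $C_i$ of length $m = n-\ell_i$. That prefix of $C_i$ is ${\bf b}[i+1..i+m]$, so its number of ones is $a(i+m)-a(i)$. Hence
$$a(n) = N_i + a(i+m) - a(i).$$
This gives a self-similar relation, and I would set up an induction controlling the error $E(n) = |a(n)-\alpha n|$.

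The claimed bound is $E(n) \le E(i+m) + |N_i - \alpha\ell_i| + \alpha i + a(i)$. Here $i+m \le n - 2^{i-1}$, roughly $n/2$ or less. The quantity $|N_i-\alpha \ell_i|$ is $O(\ell_i\,\epsilon_i)$, where $\epsilon_i$ is the tail of the series, of order $i2^{-i}$. So $|N_i-\alpha \ell_i| = O(i)$. The remaining terms are also $O(i) = O(\log n)$.

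Unrolling the recursion, the argument shrinks by at least roughly a factor of $2$ at each step, so there are at most $O(\log n)$ steps. Each step contributes $O(\log n)$, for a total of $E(n) = O(\log^2 n) = o(n)$. Therefore $a(n)/n \to \alpha$.

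The care needed is in checking that the argument really drops geometrically. When $m$ is small, $i+m$ is near $i \approx \log n$, which is even better. When $m$ is large, $i+m \le \ell_{i+1}-\ell_i+i = 2^{i-1}+1+i$, which is about half of $\ell_{i+1}$. I expect this bookkeeping to be the only delicate part.
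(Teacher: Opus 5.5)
Your proposal is correct and follows the paper's proof essentially step for step: the same recurrence $N_{i+1} = 2N_i - a(i)$ normalized by $2^{i-1}$, the same interchange of summation giving $\alpha = 2 - 2\sum_i b_i 2^{-i}$, and the same self-similar identity $a(\ell_i + m) = N_i + a(i+m) - a(i)$ with $|N_i - \alpha\ell_i| = O(i)$. The only difference is how Step 3 is closed. The paper sets $\rho = \limsup_{N} |E(N)|/N$ and uses the contraction $(i+m)/n \le 1/2 + o(1)$ to get $\rho \le \rho/2$, hence $\rho = 0$. You instead unroll the same inequality, using $i+m = n - 2^{i-1} - 1 \le (n+i)/2$, and this gives the stronger explicit bound $|a(n) - \alpha n| = O(\log^2 n)$.
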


\begin{proof}
First we study the number $s(n)$ of $1$'s in prefixes of length
$\ell_n = |B_n|$.

Since $B_n = B_{n-1} B_{n-1}[n..\ell{n-1}]$, the number of $1$'s
the second factor is $s(n-1) - a(n-1)$.
Hence $s(n) = 2s(n-1) - a(n-1)$.  Dividing by $2^{n-1}$, we get
$$ {{s(n)}\over {2^{n-1}}} = {{s(n-1)} \over {2^{n-2}}} - 
{{a(n-1)} \over 2^{n-1}} .$$
Since $s(1) = 2$, iteration gives
\begin{equation}
 {{s(n)} \over {2^{n-1}}} = 2 - \sum_{1 \leq j < n} {{a(j)} \over {2^j}} .
\label{sn}
\end{equation}

Define $\alpha = 2 - \sum_{j \geq 1} {{a(j)} \over {2^j}} $.  This
is well-defined because $a(j) \leq j$ and
$\sum_{j \geq 1} j/2^j$ converges.   Then Eq.~\eqref{sn}
gives $\lim_{n \rightarrow \infty} {{s(n)}\over {2^{n-1}}} = \alpha$.
Since $\ell_n = 2^{n-1} + O(n)$, we also get
$\lim_{n \rightarrow \infty} {{s(n)}\over {\ell_n}} = \alpha$.

Next, we obtain an expression for $\alpha$.
Since $a(j) = \sum_{1 \leq i \leq j} b_i$,
we have
\begin{align*}
\sum_{j \geq 1} {{a(j)} \over {2^j}} &= \sum_{j\geq 1} 2^{-j} \sum_{1 \leq i \leq j} b_i \\
&= \sum_{i \geq 1} b_i \sum_{j \geq i} 2^{-j} \\
&= \sum_{i \geq 1} b_i 2^{1-i}  = 2 \sum_{i \geq 1} b_i 2^{-i}.
\end{align*}
Now if $\beta = \sum_{i \geq 1} b_i 2^{-i}$  then
\begin{equation}
\alpha = 2 - \sum_{j \geq 1} {{a(j)}\over {2^j}} = 2-2 \beta.
\label{alphabeta}
\end{equation}

We have now proven that the limiting density of 
$1$'s in $\bf b$ exists and equals
$\alpha$, if we only consider prefixes of length $\ell_i$.
It remains to consider arbitrary prefixes.

Define $E(N) = a(N) - \alpha N$ for $N \geq 1$.
From Eq.~\eqref{sn} we have
$$ {{s(n)} \over 2^{n-1}} = \alpha + \sum_{j \geq n} {{a(j)} \over
{2^j}} $$
and 
$$ \sum_{j \geq n} {{a(n)} \over {2^j}} = O(n 2^{-n}).$$
Therefore
$s(n) = \alpha 2^{n-1} + O(n)$.
Since $\ell_n = 2^{n-1} + n + 1$, it follows that
$$E(\ell_n) = s(n) - \alpha \ell_n = O(n).$$

Now consider an arbitrary integer $N \geq 1$.  
Write $N$ uniquely as $N = \ell_n + t$ where
$0 \leq t \leq \ell_{n+1} - \ell_n$.
Now
$\ell_{n+1} - \ell_n = \ell_n - n$, so 
$0 \leq t \leq \ell_n - n$.
Since $B_{n+1} = B_n B_n[n..\ell_n]$,
the length-$(\ell_n + t)$ prefix of $\bf b$
is $B_n B_n[n+1..n+t]$.  
Thus we get
$ a(N) = a(\ell_n + t) = s(n) + a(n+t)-a(n)$.
Subtracting $\alpha(\ell_n + t)$, we get
\begin{align}
E(\ell_n + t) &= s(n) - \alpha \ell_n + a(n+t) - a(n) - \alpha t  \nonumber \\
&= E(\ell_n) + E(n+t) - E(n)  \label{ee}.
\end{align}

Now define $\rho = \limsup_{N \rightarrow \infty} |E(N)|/N$.
From  the definition of $E$ we have $\rho < \infty$.

For $N = \ell_n + t$ we get from Eq.~\eqref{ee} that
$|E(N)| \leq |E(\ell_n)| + |E(n+t)| + |E(n)| $.
Dividing by $N = \ell_n + t$ we get
\begin{equation}
{{|E(N)|} \over N} \leq 
{{|E(\ell_n)|} \over N} + 
{{|E(n+t)|} \over {n+t}} \cdot {{n+t} \over {\ell_n + t}} +
{{|E(n)|} \over N}. \label{ineq}
\end{equation}
The first term tends to $0$ because
$|E(\ell_n)| = O(n)$.
The third term also tends to $0$
because $|E(n)| = O(n)$.

For the middle term,  note that 
$$ {{n+t} \over {\ell_n + t}} \leq {{\ell_n} \over {2\ell_n - n}} $$
because $0 \leq t \leq \ell_n - n$.
Now $(n+t)/(\ell_n + t)$ is an increasing function of $t$,
and its maximum on the interval $t \in [0, \ell_n - n]$
is attained at $t = \ell_n -n$, where it takes the value
$\ell_n/(2\ell_n - n)$.
Therefore this maximum is $1/2 + o(1)$.  

Taking $\limsup$ in \eqref{ineq}, we get $\rho \leq \rho/2$,
and hence $\rho = 0$.   Thus
$\lim_{N \rightarrow \infty} E(N)/N = 0$
and hence
$\lim_{N \rightarrow \infty} a(N)/N = \alpha$.
\end{proof}

\begin{remark}
The density  $\alpha$ is approximately $0.64505878493452$.
\end{remark}

\section{More about the density $\alpha$}

\begin{theorem}
The number $\alpha$ is transcendental.
\end{theorem}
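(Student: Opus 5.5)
Since $\alpha = 2-2\beta$ with $\beta = \sum_{i\ge1} b_i 2^{-i}$ by Eq.~\eqref{alphabeta}, it suffices to show that $\beta$ is transcendental. The number $\beta$ is the real number whose binary expansion is $0.b_1b_2b_3\cdots$. The natural tool is a Liouville-type or Roth-type result for numbers whose base-$b$ expansion contains very long repetitions early on. Concretely, I would use the theorem of Adamczewski and Bugeaud (a consequence of the Schmidt subspace theorem), or its simpler ancestor due to Ferenczi and Mauduit via Ridout's theorem. That theorem says the following. Suppose an infinite non-ultimately-periodic word $\mathbf{x}$ has, for infinitely many $n$, a prefix of the form $U_nV_nV_n'$, where $V_n'$ is a prefix of $V_n$, the ratio $|U_nV_nV_n'|/|U_nV_n|$ is bounded below by $1+w$ for some fixed $w>0$, and $|V_n|\to\infty$ with $|U_n|/|V_n|$ bounded. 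Then the real number with base-$b$ expansion $\mathbf{x}$ is transcendental. Non-periodicity of $\bf b$ is immediate, for instance from Corollary~\ref{corfac}, since an ultimately periodic word has bounded factor complexity.

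The structure I would exploit is the border phenomenon from Lemma~\ref{good} and Theorem~\ref{recur}. Take the good indices $q_i$. The proof of Lemma~\ref{good} shows that $B_{q_{i}+1}$ both begins and ends with $B_{q_i}$, and $B_{q_i+1} = B_{q_i}C_{q_i}$ has length about $2\ell_{q_i}$. More useful is a direct repetition. Since $B_{i+1}=B_iC_i$ and $C_i$ is $B_i$ minus its first $i$ symbols, the word $B_{i+1}$ is $B_i$ followed by a copy of $B_i[i+1..\ell_i]$. Hence $B_{i+1} = B_i[1..i]\, V\, V$ with $V = B_i[i+1..\ell_i]$.

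Let me check this. $B_i = B_i[1..i]\,B_i[i+1..\ell_i]$, so $B_{i+1} = B_i[1..i]\,V\,V$ exactly. So for every $i$, $\bf b$ has the prefix $U_iV_iV_i$ with $|U_i| = i$ and $|V_i| = 2^{i-1}+1$ by Proposition~\ref{useful}(a). This is a square occurring after a prefix of only logarithmic length.

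The plan is therefore as follows:
\begin{enumerate}
\item Record the factorization $B_{i+1} = U_iV_i^2$ with $|U_i|=i$ and $|V_i|=2^{i-1}+1$.
\item Observe that the ratio $|U_iV_i^2|/|U_iV_i|$ tends to $2$ and $|U_i|/|V_i|\to 0$, so the Diophantine condition holds with $w$ close to $1$.
\item Apply the Ridout-based criterion. I could even do this by hand: the rational number
\[ p_i/q_i \;=\; \frac{\mathrm{val}(U_i) + \mathrm{val}(V_i)/(2^{|V_i|}-1)}{2^{|U_i|}}, \]
whose expansion is $U_iV_iV_iV_i\cdots$, has denominator dividing $2^{i}(2^{|V_i|}-1)$ and agrees with $\beta$ to $i+2|V_i|$ binary places.
\item Conclude that $|\beta - p_i/q_i| < 2^{-(i+2|V_i|)} \approx q_i^{-2}\cdot(\text{small})$. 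Then use Ridout's theorem with the prime $2$ to beat exponent $2$: the $2$-part of the denominator is $2^i$, which is negligible, but the non-$2$ part $2^{|V_i|}-1$ satisfies $|\beta-p_i/q_i|\le (2^{|V_i|})^{-2+o(1)}$. That alone is not enough for Roth, so I would take $U_iV_i^{2}$ together with the extra agreement coming from the next steps of the recursion.
\end{enumerate}

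The main obstacle is step 4: a plain square gives only Roth exponent $2$, which is insufficient. I expect the fix to be that the agreement extends beyond $U_iV_i^2$. The next word $C_{i+1}$ begins with $B_{i+1}[i+2..]$, which is $V_i$ minus its first symbol, followed by $V_i$. So the repetition continues almost as $V_i^{3}$ with a one-symbol defect, and I would quantify how long the period $|V_i|$ actually persists. If it persists past $(2+\varepsilon)|V_i|$, then Ridout's $p$-adic version of Roth's theorem, with the denominator $2^{i}(2^{|V_i|}-1)$ treated as a power of $2$ times a cofactor, gives transcendence. Failing that, I would invoke the Adamczewski--Bugeaud criterion directly. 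That criterion needs only ratio $>1$ and bounded $|U|/|V|$, both of which step 2 supplies, so the theorem follows from it without further estimates.
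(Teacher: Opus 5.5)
Your final argument is the paper's proof: the factorization $B_{i+1}=U_iV_i^2$ with $|U_i|=i$ and $|V_i|=2^{i-1}+1$ is fed into the Adamczewski--Bugeaud stammering criterion, and rationality is excluded because an ultimately periodic word has bounded factor complexity, contradicting Corollary~\ref{corfac}. The Ridout detour in steps 3--4 is unnecessary, and as you noticed a plain square is not enough for Ridout's theorem; for that reason you should not call the Ferenczi--Mauduit/Ridout result an equivalent ``simpler ancestor'' of the exponent-greater-than-$1$ criterion, since the Schmidt subspace theorem is what makes exponent $2$ suffice.
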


\begin{proof}
From Eq.~\eqref{alphabeta}, it suffices to prove that 
$\beta = \sum_{i \geq 1} b_i 2^{-i}$ is transcendental.

For this we use the following simplified version of the
``stammering'' criterion of Adamczewski and Bugeaud
\cite[Thm.~5]{Adamczewski&Bugeaud:2007}:
\begin{lemma}
Let ${\bf a} = (a_i)_{i \geq 1}$ be a bounded sequence of integers.
If there exists a real number $t>1$  and two sequences
of finite words $(u_n)_{n \geq 1}$ and $(v_n)_{n \geq 1}$  such that
\begin{itemize}
\item[(a)] $u_n v_n^t$ is a prefix of $\bf a$ for $n \geq 1$;
\item[(b)] The sequence $(|u_n|/|v_n|)_{n \geq 1}$ is bounded above
by a constant;
\item[(c)] The sequence $(|v_n|)_{n \geq 1}$ is increasing;
\end{itemize}
then $\sum_{i\geq 1} a_i 2^{-i}$ is either rational
or transcendental.
\label{adam}
\end{lemma}

Take ${\bf a} = {\bf b}$, $u_n = {\bf b}[1..n]$,
$v_n = {\bf b}[n+1..\ell_n]$, and $t = 2$ in Lemma~\ref{adam}.
Then $B_n = u_n v_n$ and $B_{n+1} = u_n v_n^2$, so
$u_n v_n^t$ is a prefix of $\bf b$.  Also
$|u_n| = n$, $|v_n| = \ell_n - n = 2^{n-1} + 1$,
so $|u_n|/|v_n|$ is bounded, and $|v_n| \rightarrow \infty$.

Thus by Lemma~\ref{adam}, the number
$\beta$ is either rational or transcendental.
But it cannot be rational, for then the factor complexity
of $\bf b$ would be $O(1)$, contradicting
Corollary~\ref{corfac}.  Thus $\beta$ is transcendental
and so is $\alpha$.
\end{proof}

\section{Acknowledgments}

These results were obtained during multi-day experimentation with  
APL and collaboration with ChatGPT 5.5,
in May 2026.  All claims were rewritten and verified by the author.

\end{document}